\documentclass{llncs}
\usepackage{amssymb,amsmath}
\usepackage{todonotes}
\DeclareMathOperator{\Diff}{Diff}

\DeclareMathOperator{\GL}{GL}
\DeclareMathOperator{\SE}{SE}

\newcommand{\lb}{\langle}
\newcommand{\rb}{\rangle}
\newcommand{\pder}[2]{\frac{\partial #1}{\partial #2}}
\newcommand{\Rd}{\mathbb{R}^d}

\begin{document}

\title{Symmetries in LDDMM with higher-order momentum distributions}

\author{Henry O. Jacobs}

\institute{Mathematics Department, Imperial College London, 180 Queen's Gate Road, London UK, SW7 2AZ}
\maketitle

\begin{abstract}
In the \emph{landmark large deformation diffeomorphic metric mapping} (landmark-LDDMM) formulation for image registration, we consider the motion of particles which locally translate image data.
We then lift the motion of the particles to obtain a motion on the entire image.
However, it is certainly possible to consider particles which also apply local rotations, scalings, and sheerings.
These locally linear transformations manifest as a result of a non-trivial first derivative of a diffeomorphism.
In this article, we seek to understand a structurally augmented particle which applies these local transformations.
Moreover, we seek to go beyond locally linear transformations by considering the algebra behind the $k$-th order Taylor expansions of a diffeomorphism, a.k.a. the $k$-jet of a diffeomorphism.
The particles which result from understanding the algebra of $k$-jets permit the use of higher-order local deformations.
Additionally, these particles have internal symmetries which lead to conserved momenta when solving for geodesics between images.
Knowledge of these structures provide both a sanity check and guide for future implementations of the LDDMM formalism.
\end{abstract}

\section{Introduction}\label{sec:Introduction}
In the \emph{Large Deformation Diffeomorphic Metric Mapping} (LDDMM) formulation of image registration, we begin by considering an image on a manifold $M$ which we transform via the diffeomorphism group, $\Diff(M)$.
As a finite dimensional representation of $\Diff(M)$ we consider the space of Landmarks, $Q := \{ ({\bf x}_1, \dots, {\bf x}_N) \in M^N \mid {\bf x}_i \neq {\bf x}_j \text{ when } i \neq j \}$.
Given a trajectory $q(t) \in Q$ we can construct a diffeomorphism $\varphi \in \Diff(M)$ by integrating a time dependent ODE obtained through a horizontal lift from $TQ$ to $T\Diff(M)$.
In particular, LDDMM is fueled by a natural lift from geodesics on $Q$ into geodesics on $\Diff(M)$.
However, this version of LDDMM only allows for local translations of image data (see Figure \ref{fig:0}).
If we desire to consider higher-order local transformations (such as shown in Figures \ref{fig:1} and \ref{fig:2}) we should augment our particles with extra structure.
This augmentation is precisely what is done in \cite{Sommer2013}.
In this paper we explore this extra structure further by finding that the space in which these particles exist is a principal bundle, $\pi^{(k)}: Q^{(k)} \to Q$.\footnote{For example, if $M = \mathbb{R}^n$, we find $Q^{(1)} = \GL(n) \times \mathbb{R}^n$.}
In other words, the particles of \cite{Sommer2013} have extra structure, analogous to the gauge of a Yang-Mills particle.
As a result of Noether's theorem, this extra symmetry will yield conserved quantities.
Identifying such conserved quantities can guide future implementations of the LDDMM formalism.
In particular, integrators which conserve Noetherian momenta are typically very stable, and capable of accuracy over large time-steps \cite{Hairer2002}.
Finally, we seek to study this symmetry using coordinate-free notions so as to avoid restricting ourselves to $\mathbb{R}^n$.
This is particularly important in medical imaging wherein one often deals with the topology of the $2$-sphere \cite{Glaunes2004,Kurtek2013}.

\begin{figure}[h]
   \begin{minipage}{0.3\textwidth}
   \centering
   \includegraphics[width=0.9\textwidth]{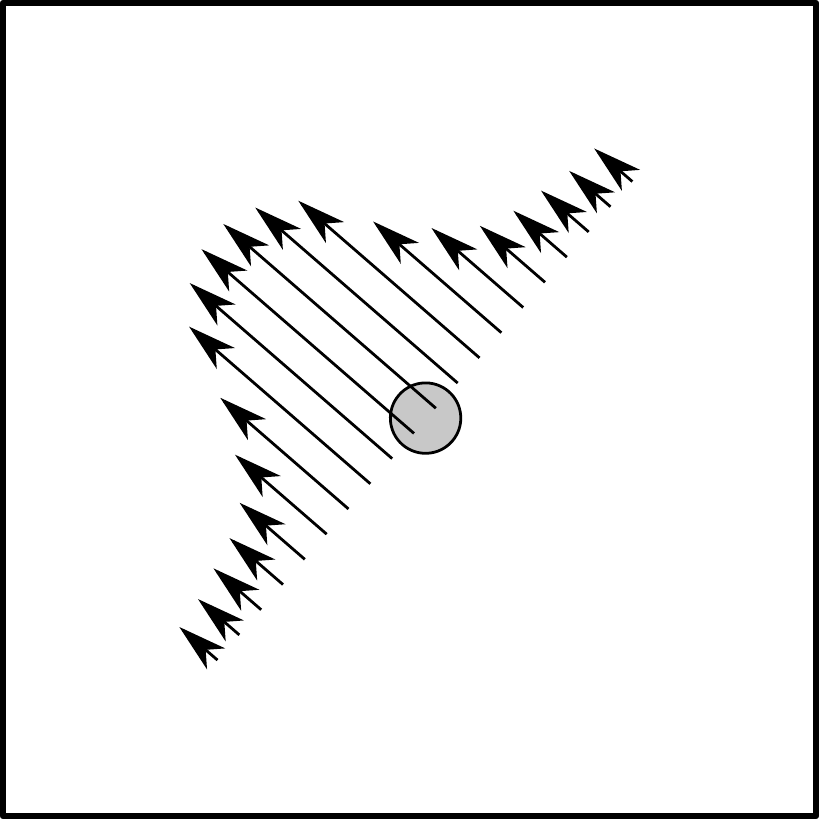} 
   \caption{A $0^{\rm th}$-order jet}
   \label{fig:0}
   \end{minipage}
   \begin{minipage}{0.3\textwidth}
   \centering
   \includegraphics[width=0.9\textwidth]{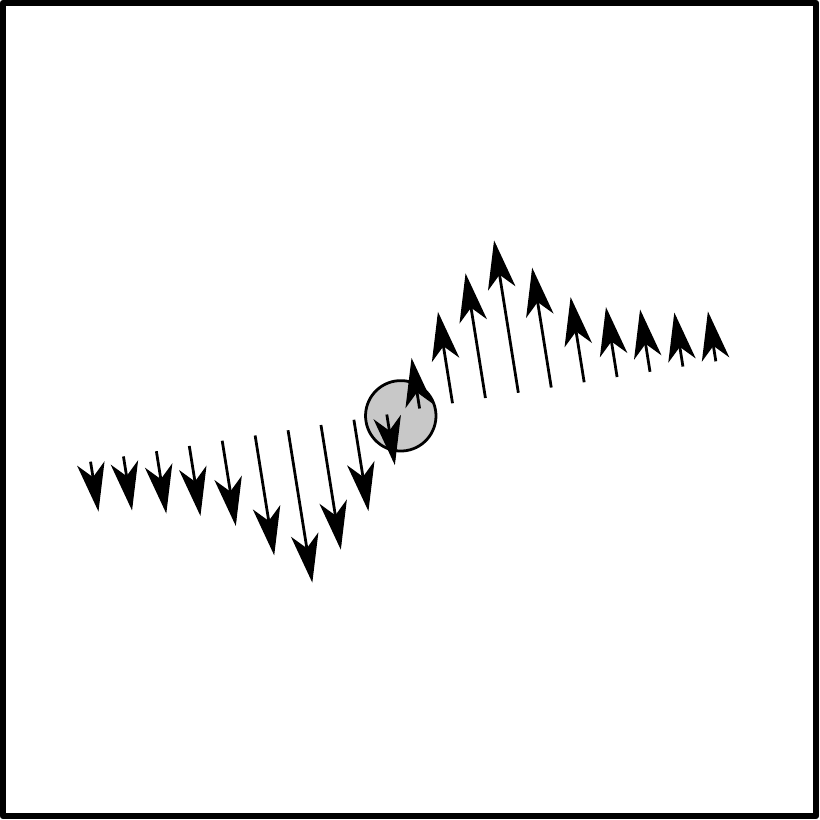} 
   \caption{A $1^{\rm st}$-order jet}
   \label{fig:1}
   \end{minipage}
   \begin{minipage}{0.3\textwidth}
   \centering
   \includegraphics[width=0.9\textwidth]{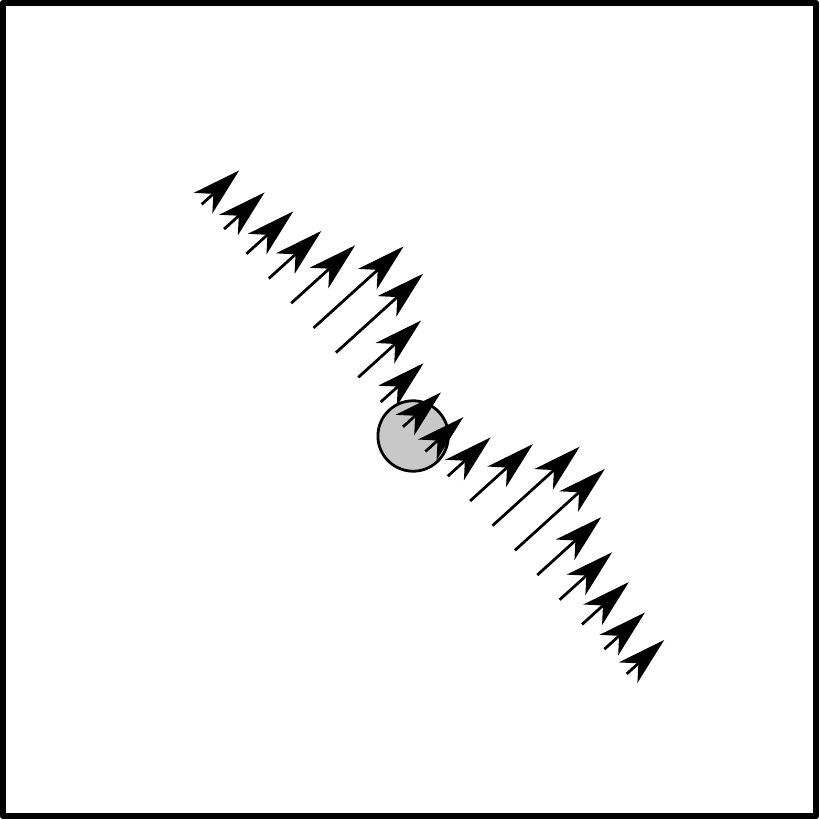} 
   \caption{A $2^{\rm nd}$-order jet}
   \label{fig:2}
   \end{minipage}
\end{figure}  
  
  \section{Background}\label{sec:Background}
Consider LDDMM ~\cite{Beg2005,Cao2005} and the deformable template model \cite{Grenander1994}.
In this framework one seeks to minimize a cost functional on $\Diff(M)$ which provides some notion of distance between images.
It is customary to use particles (or ``landmarks'') and interpolate the particle velocities in order to obtain smooth vector fields on all of $M$.
In certain cases, the diffeomorphisms obtained by integration of these fields satisfy a set of geodesic equations (see \cite[\S 5.4]{PatternTheory} or \cite[Ch. 15]{HoScSt2009}).
However, local translations of data do not encompass all the local transformations of local image data.
By ``local image data'' we mean the data of an image which is contained in the germ of a single point in $M$.
Such considerations are articulated in \cite{Florack1996} wherein the authors consider the truncated Taylor expansion of gray-scale image data about a single point.
There the authors sought to understand how the heat equation evolves this Taylor expansion in order to design multi-scale filtering techniques.
The spirit of \cite{Florack1996} is very close to what will be explained here, where we will consider Taylor expansions of diffeomorphisms (a.k.a. ``jets'').
It is notable that the $k^{\rm th}$-order jet of a diffeomorphism is precisely the (finite-dimensional) object required to advect the Taylor expansions of image data; thus,
this paper can be seen as an LDDMM analog of \cite{Florack1996}.
In particular, we will be investigating the version of landmarck LDDMM proposed in \cite{Sommer2013}, in which the particles exhibit higher order data.
This higher order data is equivalent to the time-derivative of a $k$-jet of a diffeomorphism.  For the case $k=1$, we obtain particles which are capable of locally scaling and rotating certain regions (these are called ``locally affine transformations'' in \cite{Sommer2013}).
These more sophisticated particles have structures analogous to the gauge symmetry of Yang-Mills particles \cite{CeMaRa2001,YangMills54}.
In order to understand this, we will invoke the theory of symplectic reduction by symmetry \cite{MaWe1974,FOM}.
In particular, we will perform subgroup reduction \cite{CeMaRa2001,HRBS} to understand these new particles.

\section{Geodesic flows on $\Diff(M)$}
  Let $M$ be a manifold and let $\Diff(M)$ be the group of smooth diffeomorphisms on $M$.
  The Lie algebra of $\Diff(M)$ is the vector space of smooth vector field on $M$, denoted $\mathfrak{X}(M)$, equipped with the \emph{Jacobi-Lie bracket}.\footnote{We will either assume $M$ is compact, or if $M = \Rd$ we will only consider vector-fields in the Schwartz space of all vector fields.}
  In fact, any tangent vector $v_\varphi \in T_{\varphi} \Diff(M)$ can be viewed as a composition $v \circ \varphi$ for some $v \in \mathfrak{X}(M)$.
 We can construct a right invariant metric by choosing an inner product $\lb \cdot , \cdot \rb_{\mathfrak{X}(M)} : \mathfrak{X}(M) \times \mathfrak{X}(M) \to \mathbb{R}$.
  Given this inner-product, the corresponding Riemannian metric on $\Diff(M)$ is given by
  \begin{align}
  	\lb v_\varphi, u_\varphi \rb_{\Diff(M)} := \lb \rho( v_\varphi) , \rho( u_\varphi) \rb_{\mathfrak{X}(M)} \label{eq:metric}
  \end{align}
  where $\rho: v_\varphi \in T \Diff(M) \mapsto v_\varphi \circ \varphi^{-1} \in  \mathfrak{X}(M)$ is the right Maurer-Cartan form.
  
  The LDDMM formalism involves computing geodesics on $\Diff(M)$ with respect to a metric of the form \eqref{eq:metric}.
  In particular this means solving the \emph{Euler-Lagrange equations} with respect to the Lagrangian $L:T \Diff(M) \to \mathbb{R}$ given by
  \begin{align}
  	L( v_\varphi) := \frac{1}{2} \lb v_\varphi, v_\varphi \rb_{\Diff(M)}. \label{eq:Lagrangian}
  \end{align}
  Note that for any $\psi \in \Diff(M)$ we can act on the vector $v_\varphi \in T_{\varphi} \Diff(M)$ by the right action $v_\varphi \mapsto v_\varphi \circ \psi$.  Under this action we obtain the following proposition \cite{HoScSt2009,PatternTheory}.
  \begin{proposition}
   The Lagrangian of \eqref{eq:Lagrangian} is invariant with respect to the right action of $\Diff(M)$ on $T\Diff(M)$ given by composition.
  \end{proposition}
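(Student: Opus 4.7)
The plan is to reduce the statement to a direct computation showing that the right Maurer–Cartan form $\rho$ is invariant under the right action of $\Diff(M)$ on $T\Diff(M)$ by composition. Once that is established, invariance of $L$ is automatic from its defining formula \eqref{eq:Lagrangian} together with \eqref{eq:metric}, since $L$ depends on $v_\varphi$ only through $\rho(v_\varphi)$.

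First I would fix $\psi \in \Diff(M)$ and a tangent vector $v_\varphi \in T_\varphi \Diff(M)$, and describe the image vector $v_\varphi \circ \psi$ as an element of $T_{\varphi \circ \psi} \Diff(M)$. Next I would apply $\rho$ to this vector: by definition $\rho(v_\varphi \circ \psi) = (v_\varphi \circ \psi) \circ (\varphi \circ \psi)^{-1}$. Expanding the inverse as $\psi^{-1} \circ \varphi^{-1}$ and using associativity of composition, the two copies of $\psi$ and $\psi^{-1}$ cancel, leaving $v_\varphi \circ \varphi^{-1} = \rho(v_\varphi)$. So $\rho$ is right-invariant in the strong sense that $\rho(v_\varphi \circ \psi) = \rho(v_\varphi)$ for every $\psi$.

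Having established invariance of $\rho$, I would conclude by substituting into \eqref{eq:metric} and \eqref{eq:Lagrangian}:
\begin{align*}
L(v_\varphi \circ \psi) &= \tfrac{1}{2} \langle \rho(v_\varphi \circ \psi), \rho(v_\varphi \circ \psi) \rangle_{\mathfrak{X}(M)} \\
&= \tfrac{1}{2} \langle \rho(v_\varphi), \rho(v_\varphi) \rangle_{\mathfrak{X}(M)} = L(v_\varphi).
\end{align*}
Since $\psi$ was arbitrary, this is the desired invariance.

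There is essentially no obstacle: the proof is a one-line unwinding of the definition of the right Maurer–Cartan form together with the group axioms. The only subtlety worth flagging is ensuring that $v_\varphi \circ \psi$ is really the correct description of the right action on $T\Diff(M)$ (as opposed to, say, the pushforward of the right translation map), but this is exactly how the action is introduced in the sentence preceding the proposition, so no additional justification is needed.
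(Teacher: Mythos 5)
Your argument is correct: the cancellation $\rho(v_\varphi \circ \psi) = v_\varphi \circ \psi \circ \psi^{-1} \circ \varphi^{-1} = \rho(v_\varphi)$ immediately gives $L(v_\varphi \circ \psi) = L(v_\varphi)$, and your identification of $v_\varphi \circ \psi$ as the stated right action matches the sentence preceding the proposition. The paper itself gives no proof, deferring to the cited references, and your computation is exactly the standard argument being invoked there.
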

  
  While $\Diff(M)$ is interesting, we will instead be leveraging smaller symmetry groups via the following corollary.
  
  \begin{corollary}
  	Let $G \subset \Diff(M)$ be a subgroup.  Then $L$ is $G$-invariant.
  \end{corollary}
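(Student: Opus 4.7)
The plan is to derive this as an immediate consequence of the preceding proposition, since the hypothesis $G \subset \Diff(M)$ makes the $G$-action on $T\Diff(M)$ nothing other than the restriction of the ambient right $\Diff(M)$-action by composition.

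First I would spell out what ``$G$-invariance'' means in this setting: for every $\psi \in G$ and every $v_\varphi \in T\Diff(M)$, we must check $L(v_\varphi \circ \psi) = L(v_\varphi)$. Because $G$ is a subgroup of $\Diff(M)$, each such $\psi$ is in particular an element of $\Diff(M)$, so the action $v_\varphi \mapsto v_\varphi \circ \psi$ used to define $G$-invariance is just the restriction of the right $\Diff(M)$-action appearing in the proposition. Applying the proposition to this same $\psi$ yields $L(v_\varphi \circ \psi) = L(v_\varphi)$ directly.

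If a slightly more structural presentation is preferred, I would phrase the argument in terms of a commutative diagram: let $\iota : G \hookrightarrow \Diff(M)$ be the inclusion and let $\Phi : T\Diff(M) \times \Diff(M) \to T\Diff(M)$ denote the right action by composition. Then the $G$-action is $\Phi \circ (\mathrm{id} \times \iota)$, and the proposition states $L \circ \Phi = L \circ \mathrm{pr}_1$, where $\mathrm{pr}_1$ is projection onto the first factor. Composing both sides with $\mathrm{id} \times \iota$ gives the desired $G$-invariance.

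There is no real obstacle here; the content is purely that invariance under a group descends to invariance under any subgroup, and no properties of $L$, the metric \eqref{eq:metric}, or $\mathfrak{X}(M)$ beyond what is already used in the proposition are needed. The only care required is to note that the $G$-action on $T\Diff(M)$ is being taken to be the restriction of the right action by composition, rather than some other $G$-action (e.g.\ a left action, or one conjugated by the Maurer-Cartan form $\rho$); with that convention fixed, the corollary is immediate.
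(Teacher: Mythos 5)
Your argument is correct and is exactly the reasoning the paper intends: the corollary is stated without proof precisely because invariance under the full right $\Diff(M)$-action restricts immediately to any subgroup $G$. Your extra care about fixing the $G$-action to be the restriction of right composition is sound but adds nothing beyond what the paper treats as immediate.
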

    
  \section{Higher-order isotropy subgroups}
  Consider the space of $n$-tuples of non-overlapping points in $M$ denoted by
  \[
  	Q := \{ ({\bf x}_1, {\bf x}_2, \dots, {\bf x}_n) \in M^n \mid i \neq j \implies {\bf x}_i \neq {\bf x}_j \}.
  \]
  Given any $z \in Q$ we can consider the subgroup of $\Diff(M)$ given by $G_z := \{ \psi \in \Diff(M) \mid \psi( z) = z \}$
  where $\psi(z) \in M^n$ is short-hand for $(\psi( {\bf z}_1), \dots , \psi( {\bf z}_n) )$ with $z = ({\bf z}_1,\dots, {\bf z}_n) \in M^n$.
  The homogenous space $\Diff(M) / G_z \equiv Q$ and landmark LDDMM can be seen as a reduction by symmetry with respect to $G_z$.
  However, we wish to consider the version of LDDMM proposed in \cite{Sommer2013} wherein higher-order momentum distributions are considered.
  To obtain particles with orientation and shape, we consider the subgroup $G^{(1)}_z = \{ \psi \in G_z \mid T_z \psi = T_z id \}$ where $id \in \Diff(M)$ is the identity.
  In local coordinates $x^1,\dots,x^d$ for $M$ we see that $\psi \in G^{(1)}_z$ if and only if $\left. \pder{\psi^i}{x^j} \right|_{x = z_k} = \delta^i_j$ for $k = 1,\dots, N$.
  
  We find that the homogenous space $\Diff(M) / G^{(1)}_z$ is a non-overlapping subset of the $N$ copies of the frame bundle of $M$ \cite[Chapter 4]{KMS99}.
  This gives 1st-order particles new qualities, such as shape and orientation, and allows the landmark LDDMM formalism to express localized transformations such as local rotations and scalings.
  However, these locally linear transformations are only the beginning.
  We may consider ``higher-order'' objects as well.
  This requires a ``higher-order'' notion of isotropy, which in turn requires a ``higher-order'' notion of the tangent functor $T$.

  \subsection{Higher order tangent functors}
  Given two curves $a(t), b(t) \in M$ we write $a(\cdot) \sim_k b(\cdot)$ if the $k$th-order time derivatives at $t=0$ are identical.
  In fact, $\sim_k$ is an equivalence class on the space of curves on $M$, and we denote the equivalence class of an arbitrary curve $a( \cdot )$ by $[a]_k$.
  For $k=0$ the quotient-space induced by this equivalence is $M$ itself.
  For $k=1$ the quotient space is $TM$.
  For arbitrary $k \in \mathbb{N}$ we call the quotient space $T^{(k)}M$.
  In effect, $T^{(k)}M$ consists of points in $M$ equipped with velocities, accelerations, and other higher order time-derivatives up to order $k$.

  Finally, given any $\varphi \in \Diff(M)$ the map $T^{(k)} \varphi: T^{(k)} M \to T^{(k)}M$ is the unique map which sends the equivalence class $[a]_k$ consisting of the position $a(0)$, velocity $\left. \frac{d}{dt} \right|_{t=0} a(t)$, acceleration $\left. \frac{d^2}{dt^2} \right|_{t=0} a(t)$, \dots , $\left. \frac{d^k}{dt^k} \right|_{t=0} a(t)$ to the equivalence class $[\varphi \circ a]_k$ consisting of the position $\varphi(a(0))$, velocity $\left. \frac{d}{dt} \right|_{t=0} \varphi( \dot{a}(t) )$, acceleration $\left. \frac{d^2}{dt^2} \right|_{t=0} \varphi( a(t))$, \dots , $\left. \frac{d^k}{dt^k} \right|_{t=0} \varphi(a(t))$.
  In other words, $T^{(k)} \varphi ( [ a]_k ) := [\varphi \circ a  ]_k$.
  It is simple to observe that $T^{(k)} \varphi: T^{(k)}M \to T^{(k)}M$ is a fiber-bundle diffeomorphism for each $\varphi \in \Diff(M)$.
  Moreover, $T^{(k)}$ is truly a functor, in the sense that $T^{(k)} ( \varphi_1 \circ \varphi_2) = T^{(k)} \varphi_1 \circ T^{(k)} \varphi_2$ for any two $\varphi_1, \varphi_2 \in \Diff(M)$.
  Equipped with the functor $T^{(k)}$ we can define a notion of higher order isotropy.

  \subsection{Higher order isotropy subgroups}
  Again, choose a fixed $z \in Q$ and define $G^{(k)}_z := \{ \psi \in G_z \mid T^{(k)}_z \psi = T^{(k)}_z id \}.$
  We may verify that $G^{(k)}_z$ is subgroup of $G_z$ since for any two $\psi_1,\psi_2 \in G^{(k)}_z$ we observe
  \[
  	T^{(k)}_z ( \psi_1 \circ \psi_2 ) = T^{(k)}_{\psi_2(z)} \psi_1 \cdot T^{(k)}_z \psi_2 = T_{z}^{(k)} \psi_1 \circ T_z id = T_z id.
  \]
  In local coordinates $x^1, \dots, x^d$ for $M$ we see that $\psi \in G^{(k)}_z$ if and only if
  \[
  	\left. \pder{ \psi^i}{x^j} \right|_{x = z_k} = \delta^i_j \quad , \quad \left. \frac{ \partial^{|\alpha|} \psi^i }{ \partial x^{\alpha} } \right|_{x = z_k}  = 0 \quad , \quad  1< |\alpha| \leq k \quad , \quad k = 1,\dots, N.
  \]
  In particular, the notion of a jet becomes relevant.
  \begin{definition}
  The data which consists of local partial derivatives up to $k$-th order of $\varphi \in \Diff(M)$ about the points of $z \in Q$ is called the \emph{$k$-jet of $\varphi$ with source $z$} and denoted by $j_z^k (\varphi)$.
  We denote the set of jets sourced at $z$ by $\mathcal{J}^k_z( \Diff(M))$.
  We call $z' = \varphi(z)$ the target of the jet $j^k_z( \varphi)$ and we can consider the set of jets with target $z'$.
  \end{definition}
  
    The above definition allows us to use the group structure of $\Diff(M)$ to induce a groupoid structure on jets.
  Namely, given two diffeomorphism $\varphi_1,\varphi_2 \in \Diff(M)$ such that $z' = \varphi_1(z)$ we define the product $j^k_{z'}( \varphi_2) \cdot  j_z^k( \varphi_1) = j_z^k( \varphi_2 \circ \varphi_1)$, a jet with source $z$ and target $\varphi_2( z')$.\footnote{We refer the reader to \cite[\S 4]{JaRaDe2013} or \cite[Ch 4]{KMS99} for more information on jets and jet groupoids.}
  Finally, the jet-functor allows us to define $G_z^{(k)}$ as $G_z^{(k)} = \{ \psi \in G_z \mid j^k_z( \psi) = j^k_z( id) \}$.
  
  Before we go further we must state the following observation.
  \begin{proposition}
    For $z \in Q$, the group $G^{(k)}_z$ is a normal subgroup of $G_z$ and the quotient space $G_z / G^{(k)}_z$ is itself a group.
  \end{proposition}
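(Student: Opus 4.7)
The plan is to prove normality first, since the second claim (that $G_z / G_z^{(k)}$ carries a group structure) is then an immediate consequence of the standard fact that the quotient of a group by a normal subgroup inherits a well-defined multiplication.

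For normality, the strategy is to use the functoriality of $T^{(k)}$ (equivalently, the groupoid composition of jets already introduced in the text) to compute $j_z^k(\psi \eta \psi^{-1})$ for an arbitrary $\psi \in G_z$ and $\eta \in G_z^{(k)}$, and verify it equals $j_z^k(id)$. The key observation I would use repeatedly is that every element appearing here fixes $z$: since $\psi \in G_z$ we have $\psi^{-1}(z) = z$, and since $\eta \in G_z^{(k)} \subseteq G_z$ we have $\eta(z) = z$. Thus all the intermediate source/target points in the groupoid composition collapse to $z$, and the functorial identity
\[
  T_z^{(k)}(\psi \circ \eta \circ \psi^{-1}) = T_z^{(k)}\psi \circ T_z^{(k)}\eta \circ T_z^{(k)}\psi^{-1}
\]
is legal. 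Now $\eta \in G_z^{(k)}$ gives $T_z^{(k)}\eta = T_z^{(k)}id$, which acts as the identity in the jet groupoid, so the right-hand side collapses to $T_z^{(k)}\psi \circ T_z^{(k)}\psi^{-1} = T_z^{(k)}(\psi \circ \psi^{-1}) = T_z^{(k)}id$. This yields $\psi \eta \psi^{-1} \in G_z^{(k)}$.

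I expect no substantive obstacle here; the argument is essentially bookkeeping once one has the jet-groupoid composition law. The only point that requires a moment of care is checking the base-point alignment (source of one jet matching target of the next) before writing the composition, which is exactly why the inclusion $G_z^{(k)} \subseteq G_z$ matters: without it $\eta(z)$ need not equal $z$ and the composition $T_z^{(k)}\psi \circ T_z^{(k)}\eta$ would not even be defined as written. Once normality is in hand, I would close by noting that $G_z / G_z^{(k)}$ is a group by the standard quotient construction, with multiplication $[\psi_1][\psi_2] := [\psi_1 \circ \psi_2]$ well defined precisely because of normality.
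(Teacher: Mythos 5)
Your proposal is correct and follows essentially the same route as the paper's proof: conjugate an element of $G_z^{(k)}$ by an element of $G_z$, apply the chain rule (functoriality of $T^{(k)}$) while using the fact that all base points collapse to $z$, and conclude that the conjugate has trivial $k$-jet, with the quotient group structure then following from normality. Your explicit remark about checking source/target alignment is the same bookkeeping the paper carries out by writing the intermediate base points $\psi(\varphi^{-1}(z))$ and $\varphi^{-1}(z)$ before simplifying them to $z$.
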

  \begin{proof}
    Let $\varphi \in G_z$ and let $\psi \in G^{(k)}_z$.
    We see that
    \[
    	T^{(k)}_z ( \varphi \circ \psi \circ \varphi^{-1} ) = T^{(k)}_{\psi(\varphi^{-1}(z))} \varphi \circ T^{(k)}_{\varphi^{-1}(z)} \psi \circ T^{(k)}_z \varphi^{-1}.
   \]
   However $\varphi^{-1}(z) = z$, $\psi(z) = z$, and $T^{(k)}_z \psi = T^{(k)}_z (id)$ so that
   \begin{align*}
   	T^{(k)}_z ( \varphi \circ \psi \circ \varphi^{-1} ) = T^{(k)}_{z} \varphi \circ T^{(k)}_{z} \psi \circ T^{(k)}_z \varphi^{-1} = T^{(k)}_z (id). 
   \end{align*}
   Thus $\varphi \circ \psi \circ \varphi^{-1} \in G^{(k)}_z$.
   As $\varphi \in G_z$ and $\psi \in G^{(k)}_z$ were chosen arbitrarily we see that $G^{(k)}_z \subset G_z$ is normal.
  Normality implies that the quotient $G_z / G^{(k)}_z$ is itself a Lie group.
  \end{proof}
  It is not difficult to verify that $G_z / G^{(k)}_z$ is the set of $k$-jets of elements in $G_z$ with source $z$.
  Then multiplication in $G_z / G^{(k)}_z$ is given by $j^k_z (\psi_1) \cdot j^k_z (\psi_2) = j^k_z( \psi_1 \circ \psi_2 )$ for $\psi_1, \psi_2 \in G_z$.
  In the case that $k=1$ we observe $G_z/G^{(1)}_z = \GL(d)^n$.
  In the case that $k = 2$ it is important to note that the set of constants $c^i_{jk} = \left. \frac{\partial^2 \varphi^i}{\partial x^j \partial x^j} \right|_{z_k}$ form a contravariant rank 1 covariant rank 2 tensors which satisfies $c^{i}_{jk} = c^{i}_{kj}$.  We call the vector space of such tensors $S^1_2$ and we observe that $G_z / G^{(2)}_z$ is an $n$-fold cartesian product of the centered semi-direct product group $\GL(d) \bowtie S^1_2$ \cite{CoJa2013}.
  
  Finally, we denote the homogenous space $Q^{(k)} := \Diff(M) / G^{(k)}_z$.
  The space $Q^{(k)}$ manifests in the LDDMM formalism of \cite{Sommer2013} by adding structure to the landmarks.
  Just as $Q^{(1)}$ was a right $\GL(d)$-principal bundle over $Q$, we will find $Q^{(k)}$ is a right $G_z / G^{(k)}_z$-principal bundle over $Q$.

\section{Reduction by symmetry at $0$-momenta}
In this section, our aim is to express $G_z / G^{(k)}_z$ as a symmetry group for a reduced Hamiltonian system on $T^{\ast}Q^{(k)}$.  We will discuss reduction theory on the Hamiltonian side as performed in \cite{FOM,MandS,Singer2001}.  Of course, the reduction theory of Lagrangian mechanics is well understood \cite{CeMaRa2001} and we will use the notion of a Lagrangian momentum map to simplify certain computations.

We shall assume that the reader is familiar with the transition from Lagrangian mechanics to Hamiltonian mechanics via the Legendre transformation.
In our case, this transformation yields a Hamiltonian $H: T^{\ast} \Diff(M) \to \mathbb{R}$.
 As $H$ admits $\Diff(M)$-symmetry inherited from $L$, we obtain a Noether theorem.
To investigate this, we consider the momentum map for the cases to be considered in this paper.
\begin{proposition}
  Given a Lie subgroup $G \subset \Diff(M)$ with Lie algebra $\mathfrak{g}$, there exists a natural action of $g \in G$ on $T^{\ast}\Diff(M)$ given by $\lb T^{\ast}R_g(p) , v_\varphi \rb = \lb p , TR_g v \rb$.
  The momentum map $J:T^{\ast}\Diff(M) \to \mathfrak{g}^*$ induced by this action is given by $J(p_\varphi) = \left. T^{\ast}\varphi \cdot p_{\varphi} \right|_{\mathfrak{g}}$
\end{proposition}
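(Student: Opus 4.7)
The plan is to derive the formula by specializing the universal expression for the momentum map of a cotangent-lifted right action to $Q = \Diff(M)$, with $G$ acting by right translation. The first step is to identify the infinitesimal generator of this action on the base $\Diff(M)$. Differentiating the curve $t \mapsto \varphi \circ \exp(t\xi)$ at $t=0$ for $\xi \in \mathfrak{g} \subset \mathfrak{X}(M)$ gives, pointwise in $x \in M$, the vector $T_x\varphi \cdot \xi(x)$; that is, $\xi_{\Diff(M)}(\varphi) = T_e L_\varphi \cdot \xi$, where $L_\varphi(\psi) := \varphi \circ \psi$. This is the standard ``left-trivialized'' generator one expects from a right action.

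The second step is to invoke the classical fact that any cotangent-lifted group action on $T^\ast Q$ is Hamiltonian with respect to the canonical symplectic form $\omega_{\rm can} = -d\theta_{\rm can}$, and admits an $\operatorname{Ad}^\ast$-equivariant momentum map given by the universal formula $\lb J(p_q), \xi \rb = \lb p_q, \xi_Q(q) \rb$ for $\xi \in \mathfrak{g}$ and $p_q \in T^\ast_q Q$ (see \cite{FOM}). The only nontrivial input is the invariance identity $(T^\ast R_g)^\ast \theta_{\rm can} = \theta_{\rm can}$, after which the defining property $dJ_\xi = i_{\xi^T_Q} \omega_{\rm can}$ follows by a direct computation. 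Specializing to $Q = \Diff(M)$ with the generator computed above, and using adjointness of the pairing,
\begin{align*}
\lb J(p_\varphi), \xi \rb = \lb p_\varphi, T_e L_\varphi \cdot \xi \rb = \lb T^\ast_e L_\varphi \cdot p_\varphi, \xi \rb,
\end{align*}
yields $J(p_\varphi) = T^\ast_e L_\varphi \cdot p_\varphi$ restricted to $\mathfrak{g}$. Interpreting the shorthand $T^\ast \varphi \cdot p_\varphi$ as the cotangent pullback $T^\ast_\varphi \Diff(M) \to T^\ast_e \Diff(M) = \mathfrak{X}(M)^\ast$ applied to $p_\varphi$ then produces exactly the stated formula.

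The main obstacle is the infinite-dimensionality of $\Diff(M)$: the cotangent-lift momentum-map theorem is classical for finite-dimensional Lie groups, so in the present setting one must rely on the compact-$M$ or Schwartz-class hypothesis from the paper's footnote to guarantee that $T^\ast \Diff(M)$ carries a bona fide canonical symplectic structure and that the formal manipulation is literally valid. Granted that, the remaining work is just bookkeeping of the dualities between $T^\ast$, $L_\varphi$ and $R_\varphi$, and checking that the chosen convention for ``right action'' is the one defined immediately before the proposition.
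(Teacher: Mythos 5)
Your proposal is correct and follows essentially the same route as the paper: identify the infinitesimal generator of right translation at $\varphi$ as $T\varphi\cdot\xi$, apply the standard cotangent-lift momentum map formula $\lb J(p_\varphi),\xi\rb = \lb p_\varphi, T\varphi\cdot\xi\rb$, and dualize to get $J(p_\varphi) = \left. T^{\ast}\varphi\cdot p_\varphi\right|_{\mathfrak{g}}$. The paper's proof is a terser version of exactly this computation; your added remarks on the invariance of the canonical one-form and the infinite-dimensional caveats are sensible elaborations but not a different argument.
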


\begin{proof}
  By the definition of the momentum map \cite{FOM} for each $\xi \in \mathfrak{g}$ and $p_\varphi \in T_\varphi^* \Diff(M)$ it must be the case that $\lb J( p_\varphi) , \xi \rb = \lb p_{\varphi} , T \varphi \cdot \xi \rb = \lb T^{\ast} \varphi \cdot p_{\varphi} , \xi \rb$.  By construction $T^{\ast} \varphi \cdot p_{\varphi} \in ( \mathfrak{X}(M))^{\ast}$.  We observe that the momentum $J(p_{\varphi})$ is merely the restriction of $T^{\ast} \varphi \cdot p_\varphi$ to the subspace of $\mathfrak{X}(M)$ given by $\mathfrak{g}$.
\end{proof}

  Noether's theorem states that $J$ is constant in time along solutions of Hamilton's equations.  Therefore, if $J = 0$ at $t=0$ then $J = 0$ for all time.
  Moreover, we know that our $G^{(k)}_z$ symmetry allows us to reduce our equations of motion to the space $J^{-1}(0) / G^{(k)}_z \equiv T^{\ast}Q^{(k)}$ \cite[Theorem 2.2.2]{HRBS}.
  We can therefore obtain a class of solutions to our equations of motion on $T^*\Diff(M)$ by solving a set of Hamilton's equations on $T^{\ast}Q^{(k)}$ with respect to a reduced  Hamiltonian $H^{(k)} \in C^{\infty}( T^{\ast}Q^{(k)} )$.
  Note that $H(T^{\ast}R_{\psi} \cdot p_\varphi) = H(p_{\varphi})$ for any $\psi \in G^{(k)}_z$, and so $H$ maps the entire $G_z^{(k)}$ equivalence class of a $p_{\varphi} \in T^{\ast}\Diff(M)$ to a single element.
  In particular, for any $p_\varphi \in J^{-1}(0)$ we set $p$ equal to the $G^{(k)}_z$ equivalence class of $p_{\varphi}$ and  define $H^{(k)}(p) = H( p_\varphi)$.
  This implicitly defines the reduced Hamiltonian $H^{(k)} : J^{-1}(0) / G^{(k)}_z \equiv T^{\ast}Q^{(k)} \to \mathbb{R}$.

On the Lagrangian side we may define the Lagrangian momentum map $J_L: T\Diff(M) \to \mathfrak{g}_z^{(k)}$ given by composing $J$ with the Legendre transform \cite[Corollary 4.2.14]{FOM}.
Traversing a parallel path on the Lagrangian side will lead us to the reduced phase space $TQ^{(k)} \equiv J_L^{-1}(0) / G_z^{(k)}$.  The reduced Lagrangian is defined by noting that for any $v_\varphi \in T\Diff(M)$, the $G_z^{(k)}$ symmetry of $L$ implies that $L$ sends the entire $G_z^{(k)}$-equivalence class of $v_{\varphi}$ to a single number.  Thus there exists a function $L^{(k)} : TQ^{(k)} \equiv J_L^{-1}(0) / G^{(k)}_z \to \mathbb{R}$ defined by $L^{(k)}( j^k_z (v_\varphi) ) = L( v_{\varphi})$ for any $v_\varphi \in J^{-1}(0)$.
We can then solve geodesic equations on $TQ^{(k)}$ to obtain geodesics on $\Diff(M)$ via reconstruction.

Existing implementations of landmark LDDMM implicitly use this reduction to construct diffeomorphism by lifting paths in $Q^{(0)}$ to paths $\Diff(M)$ \cite{Bruveris2011,PatternTheory}.
In the context of particle methods for incompressible fluids, this correspondence is described explicitly in \cite{JaRaDe2013}.  In particular, the approach outlined in \cite{Sommer2013} suggests lifting paths in $Q^{(k)}$ to paths in $\Diff(M)$, and provides an implementation for $k=1$.
These more sophisticated particles contain extra structure, which opens the potential for extra symmetries.
In particular, the fact that the system on $TQ^{(k)}$ (or $T^{\ast}Q^{(k)}$) is obtained by a $G^{(k)}_z$ reduction of systems with $G_z$ symmetry has a consequence.
  
  \begin{theorem}
  	The reduced Hamiltonian and Lagrangian systems on $T^{\ast}Q^{(k)}$ and $TQ^{(k)}$ respectively have $G_z / G^{(k)}_z$ symmetry.
  \end{theorem}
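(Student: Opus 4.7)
The plan is to exhibit the residual symmetry explicitly, using the earlier proposition that $G^{(k)}_z$ is a normal subgroup of $G_z$ together with the $G_z$-invariance of $L$ and $H$ (which is the corollary of Section 3). I would first define a candidate action of $G_z/G^{(k)}_z$ on the reduced spaces. For $[g] \in G_z/G^{(k)}_z$ and an equivalence class $[v_\varphi] \in J_L^{-1}(0)/G^{(k)}_z \equiv TQ^{(k)}$, set $[g]\cdot[v_\varphi] := [v_\varphi \circ g]$, and define the analogous action on $T^{\ast}Q^{(k)}$ via the cotangent lift. Well-definedness requires two short checks: replacing $g$ by $gh$ with $h \in G^{(k)}_z$ only right-translates the representative by an element of $G^{(k)}_z$; and replacing $v_\varphi$ by $v_\varphi \circ h'$ with $h' \in G^{(k)}_z$ gives $v_\varphi \circ h' \circ g = (v_\varphi \circ g) \circ (g^{-1} h' g)$, which lies in the same $G^{(k)}_z$-orbit precisely because $G^{(k)}_z \triangleleft G_z$.

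Next I would verify that the $G_z$-action preserves $J_L^{-1}(0)$, so that the induced action truly lives on the reduced space. Using the formula $J(p_\varphi) = T^{\ast}\varphi \cdot p_\varphi|_{\mathfrak{g}}$ of the preceding proposition, one computes that $J_L$ transforms under the right action of $g \in G_z$ by the coadjoint action of $G_z$ on $(\mathfrak{g}^{(k)}_z)^{\ast}$; this coadjoint action is well-defined because $\mathfrak{g}^{(k)}_z$ is $\mathrm{Ad}(G_z)$-invariant (the infinitesimal counterpart of normality), and any coadjoint action fixes the zero element. Hence $G_z$ sends $J_L^{-1}(0)$ into itself, and the quotient action of $G_z/G^{(k)}_z$ on $J_L^{-1}(0)/G^{(k)}_z$ is well-defined. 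The same argument works verbatim on the Hamiltonian side with $J$ in place of $J_L$.

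The invariance of the reduced Lagrangian is then immediate: $L^{(k)}([g] \cdot [v_\varphi]) = L(v_\varphi \circ g) = L(v_\varphi) = L^{(k)}([v_\varphi])$ by the $G_z$-invariance of $L$, and the identical computation with $H$ handles the Hamiltonian case on $T^{\ast}Q^{(k)}$. One could alternatively simply invoke the subgroup reduction theorem of \cite{HRBS} after checking its hypotheses, in which case the above steps become precisely the hypothesis verification. The only delicate point, which I expect to be the main obstacle, is the normality-dependent step in well-definedness: without $G^{(k)}_z \triangleleft G_z$ the conjugate $g^{-1} h' g$ could escape $G^{(k)}_z$ and the candidate action would fail to descend to the quotient. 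That obstacle, however, was already dispatched by the earlier proposition establishing normality.
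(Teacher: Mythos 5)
Your proposal is correct and follows essentially the same route as the paper: define the residual right action of $G_z / G^{(k)}_z$ by composition with a representative $\psi \in G_z$, and deduce invariance of $L^{(k)}$ (hence of $H^{(k)}$ via the Legendre transform) from the $G_z$-invariance of $L$. The paper phrases the action in jet-groupoid language, $j^k_z(v_\varphi)\cdot j^k_z(\psi) = j^k_z(v_\varphi \circ \psi)$, and leaves both the well-definedness of the quotient action and the preservation of $J_L^{-1}(0)$ implicit, whereas you verify these explicitly via normality and momentum-map equivariance --- a worthwhile tightening of the same argument rather than a different one.
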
  
  \begin{proof}
  	Elements of $G_z / G_z^{(k)}$ are represented by $j^k_z (\psi)$ for some $\psi \in G_z$.  We observe the natural action on $Q^{(k)}$ is given by $j^k_z(\varphi) \cdot j^k_z (\psi) := j^k_z (\varphi \circ \psi)$.
	We can lift this action to $TQ^{(k)}$ in the natural way by viewing a vector $v_\varphi \in T_\varphi \Diff(M)$ as the tangent of a curve $\varphi_t \in \Diff(M)$ with $\varphi_0 = \varphi$.  In any case, we define the $k$-jet of $v_\varphi$ sourced at $z$ to be
	\[
		j_z^k( v_\varphi) = \left. \frac{d}{dt} \right|_{t=0} j_z^k( \varphi_t) \in T( \mathcal{J}_z^k( \Diff(M)) ).
	\]
	We then observe
	\[
		j^k_z (v_\varphi) \cdot j^k_z(\psi) = \left. \frac{d}{dt} \right|_{t=0} j^k_z( \varphi_t \circ \psi) =  \left. \frac{d}{dt} \right|_{t=0} j^k_z( \varphi_t) = j^k_z( v_\varphi).
	\]
	Under this tangent lifted action we find
	\[
		L^{(k)}( j_z^k (v_\varphi) \cdot j^k_z(\psi) ) = L^{(k)}( j^k_z( v_\varphi \circ \psi) ) = L(v_\varphi \circ \psi) =  L( v_\varphi) = L^{(k)}( j^k_z (v_\varphi) ).
	\]
	Thus $L^{(k)}$ is $G_z / G^{(k)}_z$.
	As $H^{(k)}$ is merely the Hamiltonian associated to the Lagrangian $L^{(k)}$ it must be the case that $H^{(k)}$ also inherits this symmetry.
  \end{proof}
  
  By Noether's theorem we find the following
  \begin{corollary}
  	Let $J: T^{\ast}Q^{(k)} \to \mathfrak{g}^*$ be the momentum map associated to the right action of $G = G_z / G^{(k)}_z$ on $T^{\ast}Q^{(k)}$.  Then $J$ is conserved along solutions to Hamilton's equations with respect to the Hamiltonian $H^{(k)} \in C^{\infty}( T^{\ast} Q^{(k)})$.
  \end{corollary}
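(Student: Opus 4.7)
The plan is to deduce this directly from the theorem just established together with the standard Noether theorem for cotangent-lifted actions, so the work is mainly bookkeeping: exhibit the $G$-action on $T^{\ast}Q^{(k)}$, check it is Hamiltonian with the claimed $J$, and invoke invariance of $H^{(k)}$.

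First I would construct the action of $G = G_z/G^{(k)}_z$ on $T^{\ast}Q^{(k)}$ as the cotangent lift of the natural right action on $Q^{(k)}$ described in the proof of the preceding theorem, namely $j^k_z(\varphi) \cdot j^k_z(\psi) = j^k_z(\varphi \circ \psi)$. Writing $\Psi_{[\psi]}: Q^{(k)} \to Q^{(k)}$ for this diffeomorphism, the cotangent lift is $T^{\ast}\Psi_{[\psi]^{-1}}: T^{\ast}Q^{(k)} \to T^{\ast}Q^{(k)}$, which is automatically symplectic with respect to the canonical symplectic form. From general theory (see \cite[Thm.~12.1.4]{FOM}) this cotangent lift admits a canonical equivariant momentum map $J: T^{\ast}Q^{(k)} \to \mathfrak{g}^{\ast}$ given by
\[
    \langle J(\alpha_q), \xi \rangle = \langle \alpha_q, \xi_{Q^{(k)}}(q) \rangle,
\]
where $\xi_{Q^{(k)}}$ denotes the infinitesimal generator of the action associated with $\xi \in \mathfrak{g} = \mathfrak{g}_z / \mathfrak{g}_z^{(k)}$. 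This is the $J$ whose conservation we want.

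Next I would verify that $H^{(k)}$ is invariant under this cotangent-lifted action. The theorem above establishes $G$-invariance of $L^{(k)}$ on $TQ^{(k)}$, and the Legendre transform intertwines the tangent-lifted and cotangent-lifted actions; hence $H^{(k)}(T^{\ast}\Psi_{[\psi]^{-1}} \cdot p) = H^{(k)}(p)$ for every $[\psi] \in G$ and every $p \in T^{\ast}Q^{(k)}$. Differentiating this identity in the direction of an arbitrary $\xi \in \mathfrak{g}$ gives $\{H^{(k)}, \langle J, \xi\rangle\} = 0$, which by the definition of the Poisson bracket on $T^{\ast}Q^{(k)}$ is equivalent to $\mathbf{d}\langle J,\xi\rangle \cdot X_{H^{(k)}} = 0$.

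Finally I would conclude by Noether's theorem: since $\langle J,\xi\rangle$ is constant along the flow of $X_{H^{(k)}}$ for each $\xi$, the map $J$ itself is conserved along solutions of Hamilton's equations for $H^{(k)}$. There is no real obstacle here; the only point requiring any care is the identification of the infinitesimal generators $\xi_{Q^{(k)}}$ arising from the quotient Lie algebra $\mathfrak{g}_z/\mathfrak{g}_z^{(k)}$, which is legitimate because $G_z^{(k)}$ was shown earlier to be a normal subgroup of $G_z$, so the quotient is a genuine Lie group acting smoothly on $Q^{(k)}$.
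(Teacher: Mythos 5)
Your proposal is correct and follows exactly the route the paper intends: the paper offers no separate proof of this corollary, simply prefacing it with ``By Noether's theorem we find the following,'' relying on the preceding theorem's $G_z/G_z^{(k)}$-invariance of $L^{(k)}$ (and hence of $H^{(k)}$, as noted at the end of that theorem's proof). You have merely spelled out the standard details---cotangent-lifted action, canonical momentum map, invariance of $H^{(k)}$ via the Legendre transform, and the Poisson-bracket form of Noether's theorem---all of which are consistent with the paper's implicit argument.
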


\section{Examples in $\mathbb{R}^d$}
From this point on, let $M = \mathbb{R}^d$ and choose an inner product on $\mathfrak{X}( \mathbb{R}^d)$ given by the expression
\[
	\lb u , v \rb_{\mathfrak{X}(\Rd)} = \int u( {\bf x}) \cdot [ \mathbb{I}(v)] ({\bf x}) d {\bf x}
\]
where $\mathbb{I}: \mathfrak{X}(M) \to \mathfrak{X}(M)$ is a $\SE(d)$ invariant psuedo-differential operator with a $C^k$ kernel given by $K: \mathbb{R}^d \to \mathbb{R}$.  For example, we could consider $\mathbb{I} = (1- \frac{1}{k+1} \Delta)^{k+1}$.

\subsection{$0^{\rm th}$-order particles in $\mathbb{R}^d$}
  Let us consider a single particle with initial condition ${\bf 0} \in \mathbb{R}^d$.  In this case we consider the isotropy group $G_0$, and the Lagrangian momentum map is given by
  \[
  	\lb J_L( v_\varphi) , \xi \rb = \int [ \mathbb{I}(\rho(v_\varphi) ) ] \cdot \xi({\bf x}) d{\bf x} \quad \forall \xi \in \mathfrak{g}_0.
  \]
  We see that $J_L( v_\varphi) = 0$ if and only if $\mathbb{I}( \rho( v_\varphi) )$ is a constant vector-field times a dirac-delta centered at $\bf 0$.  That is to say
  \[
  	\lb J_L( v_\varphi) , \xi \rb = 0 \iff  \int [ \mathbb{I}(\rho(v_\varphi) ) ] \cdot u({\bf x}) d{\bf x} = a^i \cdot u^i({\bf 0})
  \]
  for some constants $a^1,\dots,a^d \in \mathbb{R}$.
  By inspection, the statement holds if and only if 
  \begin{align}
  	\rho( v_\varphi)({\bf x}) = \vec{e}_i a^i K( {\bf x}) \label{eq:velocity_field0}
\end{align}
 where $\vec{e}_i$ is the $i$th basis vector of $\mathbb{R}^d$.  For example, if $\mathbb{I} = \lim_{k \to \infty}(1 - \frac{1}{k} \Delta)^k$ then $K({\bf x}) = \exp( - \|{\bf x} \|^2 / 2 )$ \cite{Mumford_Michor}.
 Moreover, $J_L(v_\varphi) = 0$ if and only if $\rho(v_\varphi)({\bf x}) = \vec{a} \exp( - \| \vec{x} \|^2/2 )$ for some $\vec{a} \in \mathbb{R}^d$.
As $G_0^{(k)} = G_0$ we find that the symmetry group for $0^{\rm th}$-order particles is the trivial group $G_0 / G_0 = \{ e \}$.
 In other words, $0^{\rm th}$-order particles admit trivial internal symmetry, and the reduced configuration manifold is simply $\mathbb{R}^d$.

\subsection{$1^{\rm st}$-order particles in $\mathbb{R}^d$}
  In this case we consider the isotropy group $G_0^{(1)}$, and the Lagrangian momentum map is given by
  \[
  	J_L( v_\varphi) \cdot \xi = \int [ \mathbb{I}(\rho(v_\varphi) ) ] \cdot \xi(\vec{x}) d\vec{x} \quad \forall \xi \in \mathfrak{g}_0^{(1)}.
  \]
  We see that $J_L( v_\varphi) = 0$ if and only if $\mathbb{I}( \rho( v_\varphi) )$ is the sum of a velocity field of the form \eqref{eq:velocity_field0} plus a second vector field which satisfies the derivative reproducing property
  \[
  	\int \mathbb{I}(\rho( v_\varphi)) (\vec{x})  u(\vec{x}) d\vec{x} = - b_j^i \partial_j u^i({\bf 0})
  \]
  for some set of constants $b_j^i$.  If $K$ is differentiable then a simple integration by parts argument reveals that
  \begin{align}
  	\rho(v_\varphi) (\vec{x}) = \vec{e}_i ( a^i K(\vec{x}) + b_j^i \partial_j K(\vec{x}) ) \label{eq:velocity_field1}
  \end{align}
  for some set of real numbers $a^i, b_j^i \in \mathbb{R}$ \cite[c.f equation (4.1)]{Sommer2013}.
  For example, if $\mathbb{I} = \lim_{k \to \infty} (1 - \frac{1}{k} \Delta)^k$ then $K(\vec{x}) = \exp\left( - \| \vec{x} \|^2 /2 \right)$ and $J_L(v_\varphi) = 0$ if and only if
   \begin{align*}
   	\rho(v_\varphi)(\vec{x}) = \vec{e}_i a^i \exp\left( \frac{- \| \vec{x} \|^2}{2} \right) + \vec{e}_i b^i_j x^j \exp\left( \frac{- \| \vec{x} \|^2}{2} \right). 
   \end{align*}
   These are the affine particles mentioned in \cite{Sommer2013}.  A schematic of one of these particles is shown in Figure \ref{fig:1}.  It is simple to verify that the reduced configuration manifold is $Q^{(1)} = \mathbb{R}^d \times \GL(d)$.

\subsection{Symmetries for $1^{\rm st}$-order particles in $\mathbb{R}^d$}
The symmetry group for a single $1^{\rm st}$-order particle is $G^{(1)}_0 / G_0 = \GL(d)$.
We can verify this by noting that for each $\psi \in G_0$, the jet $j^1_0( \psi)$ is described by the partial derivative $\left. \pder{\psi^i}{x^j} \right|_{x=0}$.
Moreover, by the chain rule we find
\[
	\left. \pder{}{x_j} \right|_{x=0}( \psi_1^i \circ \psi_2) = \left. \pder{\psi_1^i}{x^k} \pder{\psi_2^k}{x^j} \right|_{x=0}
\]
which is the coordinate expression for the definition of the jet-groupoid composition $j^1_0 ( \psi_1) \cdot j^1_0(\psi_2) = j^1_0( \psi_1 \circ \psi_2)$.
It is notable that the group of scalings and rotations is contained in $\GL(d)$.
These transformations were leveraged in \cite{Sommer2013} to create ``locally affine'' transformations.

Finally, $\GL(d)$ acts on the $\GL(d)$ component of $Q^{(1)} = \Rd \times \GL(d)$ by right matrix multiplication.
This makes $Q^{(1)}$ a trivial right $\GL(d)$-principal bundle over $\Rd$.

\subsection{$2^{\rm nd}$ order particles in $\mathbb{R}^d$ (and beyond)}
  In this case we consider the isotropy group $G_0^{(2)}$.
  We see that $J_L( v_\varphi) = 0$ if and only if $\mathbb{I}( \rho( v_\varphi) )$ is the sum of a velocity field of the form \eqref{eq:velocity_field1} plus a second vector field which satisfies the second-derivative reproducing property
  \[
  	\int \mathbb{I}(\rho( v_\varphi)) (\vec{x})  u(\vec{x}) = c_{jk}^i \partial_j \partial_k u^i({\bf 0})
  \]
  for some set of constants $c_{jk}^i$.  If $K$ is twice-differentiable, then a second integration by parts reveals
  \begin{align}
  	\rho(v_\varphi) (\vec{x}) = \vec{e}_i ( a^i K(\vec{x}) + b_j^i \partial_j K(\vec{x}) + c_{jk}^i \partial_j \partial_k K(\vec{x}) ). \label{eq:velocity_field2}
  \end{align}
  For example, if $\mathbb{I} = \lim_{k \to \infty} (1 - \frac{1}{k} \Delta)^k$ then $K(\vec{x}) = \exp\left( - \| \vec{x} \|^2 / 2 \right)$ and $J_L(v_\varphi) = 0$ if and only if
   \begin{align*}
   	\rho(v_\varphi)(\vec{x}) &= \vec{e}_i a^i \exp\left( \frac{- \| \vec{x} \|^2 }{2} \right) + \vec{e}_i b^i_j x^j \exp\left( \frac{- \| \vec{x} \|^2 }{2} \right) \\
		&\qquad + \vec{e}_i c^{i}_{jk} (x^j x^k - \delta_{j}^{k})  \exp\left( \frac{- \|\vec{ x} \|^2 }{2} \right) .
   \end{align*}
   We see that the collection of constants $\{ c^i_{jk} \}$ transform as (and therefore must be equal to) contravariant rank 1 covariant rank 2 tensors.
   Moreover, we see observe the symmetry $c^{i}_{jk} = c^{i}_{kj}$.
   We will denote the set of such tensors by $S^1_2$ so that the reduced configuration space is $Q^{(2)} = \Rd \times \GL(d) \times S^1_2$.
   See Figure \ref{fig:2} for a schematic of one of these particles.
   
   At this point, we may deduce that for reduction by $G^{(k)}_z$, the set $J_L^{-1}(0)$ consists of vectors $v_\varphi \in T\Diff(M)$ which satisfy $\rho(v_\varphi)(x) = 	\sum_{|\alpha| \leq k} \vec{e}_i c_{\alpha}^i \partial_{\alpha} K$
   for some series of $c_{\alpha} \in \mathbb{R}$, where the dummy index $\alpha$ is a multi-index.  The reduced configuration space is $Q^{(k)} = \Rd \times \GL(d) \times S^1_2 \times \dots \times S^1_{k}$.
   
\subsection{Symmetries for $2^{\rm nd}$-order particles in $\mathbb{R}^d$}
In the case for a single second order particle in $\mathbb{R}^d$, the symmetry group is $G_0^{(2)} / G_0$.
We see that the $2$-jet of a $\psi \in G_0$ is described by the numbers $\pder{\psi^i}{x^j}( {\bf 0})$ and $\frac{\partial^2 \psi^i}{\partial x^k \partial x^j}( {\bf 0})$.
By representing the $2$-jets concretely as partial derivatives, we arrive at the following description for $G_z / G_z^{(2)}$.

\begin{proposition}
	The group $G_0^{(2)} / G_0$ is isomorphic to the centered semi-direct product $\GL(d) \bowtie S^1_2$ where $S^1_2$ is the vector space of rank $(1,2)$ tensors on $\mathbb{R}^d$ which are symmetric in the lower indices.
\end{proposition}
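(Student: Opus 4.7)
My plan is to represent cosets in $G_0/G_0^{(2)}$ concretely by $2$-jets and then match the composition law with that of $\GL(d) \bowtie S^1_2$. First I would note that, by the definition of $G_0^{(2)}$ and the normality established earlier, the quotient $G_0/G_0^{(2)}$ is in bijection with the set of $2$-jets $j^2_0(\psi)$ of diffeomorphisms $\psi$ fixing the origin. In local coordinates, such a $2$-jet is completely determined by the pair $(A,C)$, where $A^i{}_j = \partial_j \psi^i(0)$ and $C^i{}_{jk} = \partial_j\partial_k \psi^i(0)$. The invertibility of $\psi$ at $0$ forces $A \in \GL(d)$, and equality of mixed partials gives $C^i{}_{jk} = C^i{}_{kj}$, so $C \in S^1_2$. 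Hence the jet map descends to a well-defined injection $\Phi : G_0/G_0^{(2)} \hookrightarrow \GL(d) \times S^1_2$.

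Next I would establish surjectivity of $\Phi$ by exhibiting, for any $(A,C)$, a representative diffeomorphism. The polynomial model $\psi(x)^i = A^i{}_j x^j + \tfrac{1}{2} C^i{}_{jk} x^j x^k$ is a local diffeomorphism near $0$ with the prescribed $2$-jet; cutting off the quadratic term by a smooth bump function that equals $1$ near $0$ and vanishes outside a small ball produces a global diffeomorphism of $\mathbb{R}^d$ with the same $2$-jet at $0$ (the bump radius is chosen small enough that the perturbation does not destroy invertibility of $A$). This realizes every pair $(A,C)$ in $\GL(d)\times S^1_2$ as $\Phi([\psi])$.

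The heart of the argument is the computation of the composition law via the chain rule. Differentiating $(\psi_1 \circ \psi_2)^i$ once and twice at $0$ and using $\psi_2(0)=0$ yields
\begin{align*}
\partial_j (\psi_1\circ\psi_2)^i(0) &= A_1{}^i{}_k A_2{}^k{}_j,\\
\partial_l\partial_j (\psi_1\circ\psi_2)^i(0) &= C_1{}^i{}_{mk}\, A_2{}^m{}_l A_2{}^k{}_j \;+\; A_1{}^i{}_k\, C_2{}^k{}_{lj}.
\end{align*}
Thus the induced law on $\GL(d)\times S^1_2$ is
\[
(A_1,C_1)\cdot(A_2,C_2) = \bigl(A_1 A_2,\; A_1 \cdot C_2 + C_1 \cdot (A_2 \otimes A_2)\bigr),
\]
where $\GL(d)$ acts on the upper index of $S^1_2$ from the left and on the pair of lower indices from the right. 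I would then identify this precisely with the multiplication rule of the centered semi-direct product $\GL(d) \bowtie S^1_2$ as defined in \cite{CoJa2013}, thereby upgrading the bijection $\Phi$ to a group isomorphism.

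The main obstacle I anticipate is the bookkeeping in the last step: verifying that the two $\GL(d)$-actions appearing in the chain-rule formula match the precise conventions of the ``centered'' semi-direct product, as opposed to an ordinary semi-direct product. Surjectivity of $\Phi$ and the fact that $\Phi$ is a bijection on sets are essentially immediate from the Borel-type extension argument above; the substance lies in recognizing the non-trivial double action of $\GL(d)$ on $S^1_2$ encoded in the composition formula.
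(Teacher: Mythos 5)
Your proposal is correct and follows essentially the same route as the paper: represent cosets by the pair of first and second partial derivatives at the origin, compute the composition law via the chain rule, and match it with the multiplication $(b,c)\cdot(\tilde b,\tilde c)=(b\tilde b,\; b\cdot\tilde c + c\cdot\tilde b)$ of the centered semi-direct product. The only difference is that you explicitly verify surjectivity of the jet map with a polynomial-plus-bump-function representative, a detail the paper asserts without proof when it identifies $G_z/G_z^{(k)}$ with the set of $k$-jets.
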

\begin{proof}
	For any $\psi, \varphi \in G_0$ we find that
	\[
		\left. \pder{}{x^j} \right|_{x=0} ( \varphi^i \circ \psi) =  \left. \pder{\varphi^i}{x^k} \cdot \pder{\psi^k}{x^j} \right|_{x=0}
	\]
	and
	\begin{align*}
		\left. \frac{\partial^2}{\partial x^k \partial x^j} \right|_{x=0}(\varphi^i \circ \psi) &= \left. \pder{}{x^k} \right|_{x=0} \left( \pder{\varphi^i}{x^{\ell}}(\psi(x)) \cdot \pder{\psi^{\ell}}{x^j}(x) \right) \\
			&=\left. \left( \frac{\partial^2 \varphi^i}{\partial x^{\ell} \partial x^m} \pder{\psi^{\ell} }{x^j} \pder{\psi^{m}}{x^{k}} +  \pder{\varphi^i}{x^j} \frac{ \partial^2 \psi^j}{ \partial x^j \partial x^k} \right) \right|_{x=0}
	\end{align*}
	Observe that $\left. \frac{ \partial^2 \partial}{\partial x^j \partial x^k} \right|_{x=0}$ is a contravariant rank 1 covariant rank 2 tensor, symmetric in the covariant indices.  In other words, it is an element of $S^1_2$.  Moreover, the above formulas match the composition rule for a centered semi-direct product $\GL(d) \bowtie S^1_2$ where we use the natural left action of $\GL(d)$ on $S^1_2$ given by
	\[
		(b \cdot c )^i_{jk} = b^i_{\ell} \cdot c^{\ell}_{jk} \qquad b \in \GL(d) , c \in S^1_2
	\]
	and the natural right action given by
	\[
		(c \cdot b)^i_{jk} = c^i_{\ell m} b^{\ell}_j b^{m}_k  \qquad b \in \GL(d) , c \in S^1_2.
	\]
	In particular, the group composition is given by $(b,c) \cdot (\tilde{b} , \tilde{c}) = (b \cdot \tilde{b} , b \cdot \tilde{c} + c \cdot \tilde{b})$.
	For more a verification that this is a well defined group we refer the reader to \cite{CoJa2013}.
\end{proof}
Given this symmetry group, we find that $Q^{(2)} = \Rd \times \GL(d) \bowtie S^1_2$ is a (trivial) right $\GL(d) \bowtie S^1_2$-principal bundle over $Q = \Rd$.

\section{Conclusion}\label{sec:Conclusion}
In this paper we have identified a family of isotropy groups which can be used to perform reduction by symmetry of geodesic equations on $\Diff(M)$.
We then observed that the reduced configuration spaces consisted of particles with extra group symmetries much like Yang-Mills particles.
This extra structure was interpreted as a ``localized transformation'' and corresponds to the higher-order structures described in \cite{Sommer2013}.
Computations for $M = \Rd$ were performed, and the appropriate velocity fields matched those described in \cite{Sommer2013}.
Finally, the symmetry groups for two classes of higher-order particles were computed to be $\GL(d)$ and $\GL(d) \bowtie S^1_2$.
This extra structure can be leveraged to provide greater accuracy and flexibility in existing implementations of landmark LDDMM.

\section*{Acknowledgments}\label{sec:Acknowledgments}
The author would like to thank Sarang Joshi for initiating this article as well as introducing him to \cite{Sommer2013}.
We also wish to thank Tudor S. Ratiu for helpful discussions on jets as well as David Meier and Darryl D. Holm for clarifying the Clebsch approach to deriving equations of motion.
Finally, the author would like to thank the reviewers for improving the accessibility of the paper and introducing him to \cite{Florack1996}.
H.O.J. is supported by European Research Council Advanced Grant 267382 FCCA.

\bibliographystyle{amsalpha}
\bibliography{conserved_momenta}

\end{document}